\theoremstyle{plain}
\newtheorem{thm}{Theorem}[section]
\newtheorem{prop}[thm]{Proposition}
\newtheorem{lemma}[thm]{Lemma}
\newtheorem{cor}[thm]{Corollary}
\theoremstyle{definition}
\newtheorem{defn}[thm]{Definition}
\newtheorem{exa}[thm]{Example}
\theoremstyle{remark}
\newtheorem{rem}[thm]{Remark}
\numberwithin{equation}{section}
\title{ Hopfological invariants for tame subextensions} 
\date{\today} 
\author{Mariko Ohara}
\address{Center for Liberal Arts and Sciences, Faculty of Engineering, Toyama Prefectural University, 5180, Kurokawa, Imizu City, Toyama Prefecture, JAPAN, 939-0398. 
\\ Tel : +81-766-56-7500.
}  
\email{primarydecomposition@gmail.com}
\subjclass[2020]{ 16T15, 18G20, 57T0, 55N25 (primary),  16E30, 55U10 (secondary)}
\keywords{Hopf algebra, derived category, model category}
\newcommand{\Ker}{{\mathrm{Ker}}}
\newcommand{\Hom}{\mathrm{Hom}}
\newcommand{\LMod}{\mathrm{LMod}}
\newcommand{\End}{{\mathrm{End}}}
\newcommand{\bysame}{\leavemode\hbox to3em{\hrulefill}\,}
\begin{document}
\thispagestyle{empty}

\begin{abstract}
Let $H$ be a finite dimensional Hopf algebra over a field $K$. In this paper, we study when an $H$-extension becomes a tame $H$-extension by calculating Hopfological homology and Hopf-cyclic homology. 
In the (derived) category of $\mathcal{H}$-comodules for a Hopf algebra $\mathcal{H}$ studied by Farinati~\cite{Fari}, we take Hopf subalgebra $H < \mathcal{H}$ and a certain order $\mathcal{A} \subset H$. 
We see the behavior of Hopfological homology for a tame $\mathcal{A}$-subextension $S/R$ in terms of the surjectivity of trace map and of cyclic modules, which induce Hopf-cyclic homology, for Hopf-Galois extensions with $H$ in terms of relative Hopf modules. 


\end{abstract}

\maketitle


\section{Introduction} 
Let $R$ be a commutative ring and $H$ an $R$-Hopf algebra. Recall that a finite $R$-algebra $S$, i.e., an $R$-algebra that is a finitely generated projective $R$-module, which admits a left  $H$-action is said to be an $H$-extension if we have $S^H=R$ for the fixed points of $S$ with respect to the action $H$ on $S$. We say that $S$ is a tame $H$-extension of $R$ if $S$ is an $H$-extension with $IS=R$ for the set of left integrals $I=H^H$ and is a faithful left $H$-module with $rank_R(S)=rank_R(H)$ as in ~\cite{Childs1}. 

A notion of Hopf-Galois extensions are arising from the notion of Galois extension of rings, and it deeply related to tame $H$-extensions.  
Let $H$ be a finite cocommutative $R$-Hopf algebra. A finite commutative $R$-algebra $S$ is a Hopf-Galois extension of $R$ if $S$ is a left $H$-module algebra, and the $R$-module homomorphism $j : S \otimes_R H \to \End_R(S) ; j(s \otimes h)(t)=sh(t)$ is an isomorphism. If the Hopf algebra $H$ is finite $R$-algebra, the $H$-module structure gives an $H^\ast=\Hom_R(H, R)$-comodule structure and $j$ induces a map $j': S \otimes_R S \to S \otimes_RH^\ast$, and $j$ becomes an isomorphism if and only if the induced map $j': S \otimes_R S \to S \otimes_R H^\ast$ is an isomorphism.

When $S/R$ is a finite field extension with the Galois group $G$, it is equivalent to say that $S/R$ is a Hopf-Galois extension with Hopf algebra $RG$, 
especially we have $S \# RG \cong \End_R(S) \cong S[G]$. Here, $S \# RG$ is the smash product over $R$ with respect to the Hopf algebra $RG$ and $S[G]$ is a group algebra whose underlying set is given by $\{ \Sigma_{\sigma \in G}s_{\sigma}\sigma | s_{\sigma} \in S\}$. 



For a Hopf algebra $H$ over a local field $K$ and a Hopf-Galois extension $L/K$ with $H$,  an associated order of $\mathcal{O}_L$ is defined as $\mathcal{A}=\{ h \in H | h x \in \mathcal{O}_L \, \text{for all} \, x \in \mathcal{O}_L \}$. 
If $\mathcal{A}$ in $H$ is a Hopf algebra, the integer ring $\mathcal{O}_L$ is a tame $\mathcal{A}$-extension of $\mathcal{O}_K$ if there exists a left integral $\lambda$ of $\mathcal{A}$ satisfying $\lambda \mathcal{O}_L=\mathcal{O}_K$
  by Childs~\cite{Childs1}. He proved that if $\mathcal{O}_L$ is a tame $\mathcal{A}$-extension of $\mathcal{O}_K$, then $\mathcal{O}_L$ is a free $\mathcal{A}$-module of rank one. 
However, there are many wildly ramified Galois extensions $L/K$ whose valuation rings are free over their associated order $\mathcal{A}$ but $\mathcal{A}$ is not a Hopf order. For example, if $L$ is an abelian extension of $\mathbb{Q}$ with Galois group $G$, then $\mathcal{O}_L$ is free over the associated order $\mathcal{A}$ by Leopoldt's theorem but $\mathcal{A}$ may not be a Hopf order.



Let $L/K$ be a Hopf-Galois extension of local fields with Hopf algebra $H$ over $K$. Let $S$ be the the integral closure of the integer ring $R \subset K$. Then, if the associated order $\mathcal{A} \subset H$ of $S$ is a finite dimensional Hopf algebra over $R$, $S/R$ is a tame $\mathcal{A}$-extension. If $H$ is cocommutative and $rank_R(S)=rank_R(\mathcal{A})$, the tame $\mathcal{A}$-extension is equivalent to Hopf-Galois extension with $\mathcal{A}$. 
Since $R$ is a discrete valuation ring, if $\mathcal{A}$ is a finite Hopf algebra over $R$, it has a nonzero left integral. 
Note that the certain projectivity conditions of Hopf-Galois extensions are automatically satisfied in this setting.

From the point of view, we would like to know when the associated order $\mathcal{A} \subset H < \mathcal{H}$ becomes a Hopf algebra, and when $\mathcal{A}$ is a Hopf algebra, we would like to know how Hopf-Galois extension affects Hopf-cyclic homology. 

Jara and \c{S}tefan took a Hopf subalgebra $K$ of a Hopf algebra $H$ and consider the Hopf-cyclic homology with modular pair of induced module $\mathrm{Ind}^H_K(M)=H \otimes_K M$ in \cite{JS1}. 
They showed that the cyclic structure on the relative cyclic complex of $B \subset A$ induces a cyclic module, which depends only on $H$ and stable anti-Yetter Drinfeld module $A/ [A, B]$. 
This calculation is fruitful in respect of Hopf-Galois extension. 

In this short paper, we take a (Hopf) subalgebra $H \subset \mathcal{H}$. We would like to see when an $H$-extension becomes a tame $H$-extension in certain cases by using Hopfological homology as in \cite{Qi}, \cite{Fari}, and study the behavior of cyclic modules, which induce Hopf-cyclic homology as in \cite{Kaygun} and \cite{Ohara1} when an extension $S/R$ becomes a tame $H$-subextension $S/R$ in terms of the surjectivity of trace maps as follows. 


\begin{thm}
Let $R$ be a commutative local ring. Let $\mathcal{A}$ be a finite $R$-Hopf algebra and $S$ a faithful $\mathcal{A}$-module algebra which is a finite commutative $R$-algebra of $rank_R(S)=rank_R(\mathcal{A})$. Assume that $S^\mathcal{A}=R$. 
\begin{enumerate}[(i)]
\item if $R$ is a principal ideal domain, $S/R$ is a tame $\mathcal{A}$-extension if and only if the Hopfological homology of $S$ is equal to zero.   
\item If $R$ is a field $K$ and $S/K$ is a tame $\mathcal{A}$-extension,  the Hopfological homology of any relative $(S, \mathcal{A}^\ast)$-module is zero. 
Let $R$ be a field and $S$ a finite faithful $H$-extension with a Hopf algebra $H$ over $R$. 
Suppose that $H$ is a local cocommutative Hopf algebra over $K$ and $rank_R(H)=rank_R(S)$. 
Then $S/R$ is a Hopf-Galois extension with $H$ if and only if the Hopfological homology of $S$ is equal to zero. 
\end{enumerate}
\end{thm}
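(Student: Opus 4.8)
\section*{Proof proposal}

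The plan is to funnel all four assertions through a single structural fact: a finite $R$-Hopf algebra $\mathcal{A}$ over a local ring is a Frobenius algebra over $R$ (a theorem of Larson--Sweedler over a field, of Pareigis in general), so the category of $\mathcal{A}$-modules is a Frobenius category, projective and injective objects coincide, and an object becomes contractible in the stable (Hopfological) category precisely when it is $\mathcal{A}$-projective. Accordingly I would first record the pivotal lemma: \emph{for a finite $R$-Hopf algebra $\mathcal{A}$ over a local ring, the Hopfological homology of an $\mathcal{A}$-module $M$ vanishes if and only if $M$ is projective as an $\mathcal{A}$-module}. Whether one reads ``Hopfological homology'' as the image of $M$ in the homotopy/derived category of $\mathcal{H}$-comodules in the sense of \cite{Qi,Fari} or as the associated Tate-type $\Hom$-groups, the content is the same; nailing down the chosen definition and checking that the ambient comodule category detects $\mathcal{A}$-projectivity exactly is the step I expect to demand the most care, and I would isolate it as a preliminary lemma.

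For (i), one implication is immediate: if $S/R$ is a tame $\mathcal{A}$-extension then $S$ is free of rank one over $\mathcal{A}$ by Childs~\cite{Childs1}, hence projective, hence its Hopfological homology vanishes by the pivotal lemma. Conversely, assume the Hopfological homology of $S$ is zero, so $S$ is $\mathcal{A}$-projective; choose $\mathcal{A}$-modules with $\mathcal{A}^{\oplus n}=S\oplus S'$. The module of left integrals $I=\mathcal{A}^{\mathcal{A}}$ is invertible over $R$, hence free of rank one since $R$ is a principal ideal domain; fix a generator $\lambda$. Because $h(\lambda a)=\epsilon(h)\lambda a$ for all $h,a\in\mathcal{A}$, the rule $a\mapsto\lambda a$ is a well-defined $R$-linear trace map $t_{\mathcal{A}}\colon\mathcal{A}\to I$, and $t_{\mathcal{A}}(1)=\lambda$, so $t_{\mathcal{A}}$ is onto. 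The trace $t_M(m)=\lambda m$ is additive and natural for $\mathcal{A}$-linear maps, so applying $(-)^{\mathcal{A}}=\Hom_{\mathcal{A}}(R,-)$ to $\mathcal{A}^{\oplus n}=S\oplus S'$ exhibits the surjection $t_{\mathcal{A}^{\oplus n}}$ as $t_S\oplus t_{S'}\colon S\oplus S'\to S^{\mathcal{A}}\oplus(S')^{\mathcal{A}}$, forcing $t_S\colon S\to S^{\mathcal{A}}=R$ onto. Its image is $\lambda S=IS$, so $IS=R$; together with $S^{\mathcal{A}}=R$, $\mathrm{rank}_R(S)=\mathrm{rank}_R(\mathcal{A})$ and faithfulness of $S$, this says exactly that $S/R$ is a tame $\mathcal{A}$-extension.

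For the first clause of (ii), over a field $K$ a tame $\mathcal{A}$-extension satisfies $S\cong\mathcal{A}$ as left $\mathcal{A}$-modules by Childs, while the fundamental theorem for relative Hopf modules identifies a relative $(S,\mathcal{A}^\ast)$-module $M$ with $S\otimes_K M^{\mathrm{co}\,\mathcal{A}^\ast}$, the $\mathcal{A}^\ast$-coaction, hence the dual $\mathcal{A}$-action, running through the $S$-factor; thus $M\cong\mathcal{A}\otimes_K M^{\mathrm{co}\,\mathcal{A}^\ast}$ is free over $\mathcal{A}$, in particular $\mathcal{A}$-projective, and its Hopfological homology vanishes. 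For the last clause, $R=K$ is a principal ideal domain and $S$ is a faithful $H$-extension with $\mathrm{rank}_R(H)=\mathrm{rank}_R(S)$, so (i) applies with $\mathcal{A}=H$: the Hopfological homology of $S$ vanishes if and only if $S/R$ is a tame $H$-extension. (When it vanishes one may also argue directly: $S$ is $H$-projective, hence free because $H$ is local, hence $S\cong H$ by the rank equality, whence tameness as in (i).) Since $H$ is cocommutative and the ranks agree, being a tame $H$-extension is equivalent to being a Hopf-Galois extension with $H$, as recalled in the introduction, and the asserted equivalence follows. Beyond the pivotal lemma, the two points needing real care are the direct-summand/trace argument over the possibly non-local algebra $\mathcal{A}$ in (i), and pinning down the precise $\mathcal{A}$-module structure carried by an induced relative Hopf module in (ii).
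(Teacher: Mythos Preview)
Your route through the ``pivotal lemma'' (Hopfological homology of $M$ vanishes iff $M$ is $\mathcal{A}$-projective) is genuinely different from the paper's, and unfortunately it has a gap. The paper never characterises vanishing Hopfological homology via projectivity; instead it uses the definition literally. Recall that the paper sets $H_0^{\mathcal{A}}(V)=V^{\mathcal{A}}/IV$. For $V=S$ this is $S^{\mathcal{A}}/IS=R/IS$, which vanishes iff $IS=R$; together with the standing hypotheses (faithfulness, equal rank, $S^{\mathcal{A}}=R$) this is \emph{exactly} the definition of a tame $\mathcal{A}$-extension. The PID hypothesis is invoked only to guarantee that $I=\mathcal{A}^{\mathcal{A}}$ is free of rank one, so that the quotient is well-posed. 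That is the entire proof of~(i): no projectivity, no trace/direct-summand argument.

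Your pivotal lemma, as stated over a local PID, is actually false. Take $R=\mathbb{Z}_p$, $\mathcal{A}=R[C_p]$, and $V=\ker(\varepsilon)$ the augmentation ideal. Then $V$ is $R$-free of rank $p-1$, one checks $V^{\mathcal{A}}=0$ so $H_0^{\mathcal{A}}(V)=0$, yet $V$ is not $\mathcal{A}$-projective ($\mathcal{A}$ is local of $R$-rank $p$, so projective means free, impossible for rank reasons). Thus ``Hopfological homology zero $\Rightarrow$ projective'' fails even for $R$-free modules, and your converse direction in~(i) cannot proceed as written. Of course for the particular module $S$ the implication does hold, but only because $H_0(S)=0$ already gives $IS=R$ directly, i.e.\ tameness, after which Childs yields freeness; your trace argument then becomes redundant.

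For the first clause of~(ii) the paper again takes a shorter path: Proposition~\ref{D1} converts tameness into the existence of an $\mathcal{A}$-module map $g\colon\mathcal{A}^\ast\to S$ with $g(1)=1$ (a total integral), and then Doi's theorem gives that every relative $(S,\mathcal{A}^\ast)$-Hopf module is an \emph{injective} $\mathcal{A}^\ast$-comodule, whence its Hopfological homology vanishes. Your argument instead invokes the full fundamental theorem $M\cong S\otimes_K M^{\mathrm{co}\,\mathcal{A}^\ast}$, but that requires either a Hopf-Galois/faithfully-flat hypothesis or a comodule \emph{algebra} splitting (cleftness), neither of which is assumed in this clause (cocommutativity and locality appear only in the last clause). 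So this step also has a gap. For the final clause your reduction to~(i) via \cite[Theorem~14.7]{Childs1} matches the paper exactly.
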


We use the tensor product over $R$. 
We let $B_n(S, M)=S^{\otimes n}\otimes_R M$ and denote by $B_\bullet (S, M)$ the simplicial module obtained by one sided bar construction with coefficients in $M$. 
We define $T_n(S, M)$ by $S^{\otimes n} \otimes_R M$ and the face, degeneracy, and cyclic operators on it as in Section 3. 
We take $M'$ to be a relative $(S, H^\ast)$-module. Since we assume that the antipode of $H$ is bijective, we can associated the left $H^\ast$-comodule $M$ to the right $H$-comodule $M'$. Thus, eventually we can assume that $M$ is a stable left-left anti-Yetter-Drinfeld module over $H$ and use the (co)cyclic theory as in \cite[Section 2, Section 3]{HKRS}.  

Before assuming $M$ to be an anti-Yetter-Drinfeld module, we show the following. 

\begin{thm}
Let $R$ be a commutative local ring. Let $\mathcal{A}$ be a finite $R$-Hopf algebra with bijective antipode and $S$ a faithful $\mathcal{A}$-module algebra which is a finite commutative $R$-algebra. 
Assume $S^\mathcal{A}=R$. Assume that $M$ is an $S \# \mathcal{A}$-module. 
\begin{enumerate}[(i)]
\item If $j : S \# H \to E=End_R(S)$ is an isomorphism, then $B_\bullet(S, M)$ is isomorphic to the degree $+1$ shift of $B_\bullet(S, M^\mathcal{A})$, i.e., $B_\bullet(S, M) \cong B_\bullet(S, M^\mathcal{A})[1]$. 
\item Assume further that $\mathcal{A}$ is a finite local cocommutative $R$-Hopf algebra with bijective antipode and $S$ satisfies $rank_R(S)=rank_R(\mathcal{A})$. 
Then, if the Hopfological homology of $S$ is equal to zero, then $B_\bullet(S, M) \cong B_\bullet(S, M^\mathcal{A})[1]$. 
\item Let $K$ be a field and $S$ an $\mathcal{H}$-module commutative algebra over $K$. For a finite Hopf subalgebra $H \subset \mathcal{H}$ with bijective antipode, assume that $S$ is a finite faithful $H$-extension of  $R=S^{coH^\ast}$ by restricting action of $\mathcal{H}$. 
Suppose that $S$ is a faithfully flat left $R$-module and $S/R$ is a Hopf-Galois extension with $H$. 
Then, $T_\bullet(S, M) \cong T_\bullet(S, M^\mathcal{A})[1]$. 
\end{enumerate}
\end{thm}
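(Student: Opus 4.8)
The plan is to reduce all three parts to one structural fact. Suppose $j\colon S\#\mathcal{A}\to E=\End_R(S)$ is an isomorphism and $S$ is faithfully flat over $R$; then $S\#\mathcal{A}\cong\End_R(S)$ is Morita equivalent to $R$, the equivalence being $M\mapsto M^{\mathcal{A}}=\Hom_{S\#\mathcal{A}}(S,M)$ with quasi-inverse $N\mapsto S\otimes_R N$ (the latter an $S\#\mathcal{A}$-module with $\mathcal{A}$ acting through its action on $S$ and trivially on $N$). In Hopf-theoretic language this is faithfully flat descent for relative $(S,\mathcal{A}^\ast)$-Hopf modules (a form of Schneider's structure theorem): for every $S\#\mathcal{A}$-module $M$ there is a natural isomorphism of $S\#\mathcal{A}$-modules
\[
M\;\cong\;S\otimes_R M^{\mathcal{A}},
\]
with $M^{\mathcal{A}}$ carried along with its residual $R$-module structure and trivial $\mathcal{A}^\ast$-coaction. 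I would first isolate this isomorphism together with its naturality, since everything afterwards is bookkeeping on top of it.

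For (i): since $R$ is local and $S$ is a nonzero finitely generated projective $R$-module, $S$ is $R$-free, hence a progenerator, so the descent isomorphism applies. Substituting $M\cong S\otimes_R M^{\mathcal{A}}$ into the one-sided bar construction gives a degreewise isomorphism
\[
B_n(S,M)=S^{\otimes n}\otimes_R M\;\cong\;S^{\otimes n}\otimes_R S\otimes_R M^{\mathcal{A}}=S^{\otimes(n+1)}\otimes_R M^{\mathcal{A}}=B_{n+1}(S,M^{\mathcal{A}}),
\]
which is exactly the $+1$ shift. It then remains to check that this identification carries the face and degeneracy operators of $B_\bullet(S,M)$ to those of the d\'ecalage of $B_\bullet(S,M^{\mathcal{A}})$; this is a direct verification from the face and degeneracy formulas of the one-sided bar construction, using only that $M\cong S\otimes_R M^{\mathcal{A}}$ is $S$-linear and $\mathcal{A}$-equivariant and that $\otimes_R$ is associative. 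The ``extra'' top face of $B_{n+1}(S,M^{\mathcal{A}})$, which absorbs the distinguished $S$-factor into $M^{\mathcal{A}}$, is precisely the one the shift discards.

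For (ii): I would invoke the first theorem above. Under the running assumptions with $\mathcal{A}$ finite local cocommutative and $rank_R(S)=rank_R(\mathcal{A})$, vanishing of the Hopfological homology of $S$ forces $S/R$ to be a tame $\mathcal{A}$-extension (this is the content of that theorem, or rather of its proof, which produces an $\mathcal{A}$-free ``normal basis'' for $S$); under cocommutativity and the rank equality a tame $\mathcal{A}$-extension is a Hopf-Galois extension with $\mathcal{A}$, as recalled in the introduction, so $j$ is an isomorphism and we conclude by (i). (Over a base that is not a principal ideal domain this invokes the local version of the first theorem, or a reduction to the residue field of $R$ by Nakayama, the rank hypothesis ensuring the relevant conditions survive the base change.)

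For (iii): here the hypotheses ``$S/R$ is Hopf-Galois with $H$ and $S$ is faithfully flat over $R$'' are exactly what licenses the descent equivalence over the possibly non-field base $R=S^{coH^\ast}$, so again $M\cong S\otimes_R M^{\mathcal{A}}$, now as a relative $(S,H^\ast)$-Hopf module, hence compatibly with the $H^\ast$-coaction and, using the bijectivity of the antipode, with the two-sided stable anti-Yetter-Drinfeld structure underlying the cyclic operators. The degreewise isomorphism $T_n(S,M)\cong T_{n+1}(S,M^{\mathcal{A}})$ is the same absorption as in (i), and its compatibility with faces and degeneracies is checked as there. The genuinely new point, and the hard part, is that the isomorphism must intertwine the cyclic operators $\tau_n$ and $\tau_{n+1}$ of Section~3: $\tau$ is the one structure map that mixes the $M$-slot with the $S$-slots via the coaction and the antipode, so one has to track carefully how the rotation on $S^{\otimes n}\otimes_R M$ with $M=S\otimes_R M^{\mathcal{A}}$ corresponds to the rotation on $S^{\otimes(n+1)}\otimes_R M^{\mathcal{A}}$, using that the coaction on $M$ is induced from that on $S$ and restricts trivially to the $M^{\mathcal{A}}$-factor. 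Once $\tau$ is handled, the asserted isomorphism of cyclic modules $T_\bullet(S,M)\cong T_\bullet(S,M^{\mathcal{A}})[1]$ follows.
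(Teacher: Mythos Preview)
Your proposal is correct and follows essentially the same route as the paper: the key structural isomorphism $M\cong S\otimes_R M^{\mathcal{A}}$ is obtained via Morita equivalence through $\End_R(S)$ for (i), part (ii) is reduced to (i) via \cite[Theorem~14.7]{Childs1}, and (iii) is handled by Schneider's descent theorem, exactly as the paper does. You are in fact more careful than the paper, which records only the degreewise identifications $S^{\otimes n}\otimes_R M^{\mathcal{A}}\cong S^{\otimes(n-1)}\otimes_R M$ and does not explicitly verify compatibility with the face, degeneracy, and cyclic operators that you rightly flag as needing a check.
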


\subsection*{Acknowledment} The author would like to thank Professor Takeshi Torii for suggestions and arguments on $K(n)$-local spectra, which inspired the author to study tame extensions.

%

\section{Preliminary}

Let $K$ be a field and $\mathcal{H}$ a Hopf algebra over $K$. 

For a coFrobenius Hopf algebra $\mathcal{H}$ over a field $K$, the category of right $\mathcal{H}$-comodules, denoted by $\mathcal{M}^\mathcal{H}$, and its derived category is studied in \cite{Fari}. 

If $M$ is a left $\mathcal{H}$-comodule, then $M$ can be regarded as a right $\mathcal{H}^\ast$-comodule. Here $\mathcal{H}^\ast$ stands for the linear dual $\Hom_K(\mathcal{H}, K)$ of $\mathcal{H}$. 

Now, we assume that $H \subset \mathcal{H}$ is a finite dimensional Hopf subalgebra over $K$. 
For a Hopf algebra $H$ finite dimensional over a field $K$, it is a Frobenius algebra, so that a projective $H$-module is just an injective $H$-module. We remark that the category of left $H$-modules is equivalent to the category of right $H^\ast$-comodule in this case. 
We say that $\lambda \in H$ is the left integral if it satisfies $h\lambda = \varepsilon(h)\lambda$ for all $h \in H$. 
We assume that $H$ is not semisimple throughout this paper for the existence of nonzero left integral. 

In the case that $H$ is finite dimensional over a field $K$, Qi~\cite{Qi} defined the derived category of $A \# H$-modules by formally inverting stable equivalences and it becomes a triangulated category. 

In the (derived) category  of $A \# H$-modules, we have the invariant named Hopfological homology. 
We regard $A \# H$-modules as $H$-modules via the forgetfull functor $\mathrm{U} : \LMod_{A \# H} \to \LMod_H$. 
\begin{defn}[Hopfological homology]
Let $H$ be a finite-dimensional Hopf algebra over $K$ and $I \subset H$ the set of left integrals. For an $H$-module $V$, the Hopf homology is given by $\displaystyle{\frac{V^H}{I V}}$. 
\end{defn}
\begin{rem}
Similarly, the Hopfological homology for a right $H$-comodule $M$ is defined, as in ~\cite[Section 3.2]{Fari}, by $\displaystyle{H_0^H(M)=\frac{M^{coH}}{I M}}$. It becomes a  stable invariant in the triangulated category of right $H$-comodules.  
\end{rem}

Now, we recall the notion of $H$-extension, tame $H$-extension and Hopf-Galois extension, respectively. 

\begin{defn}[cf. \cite{Childs1}, Definition 2.7, Definition 13.1]\label{ext}
Let $R$ be an associated unital ring. 
\begin{itemize}
\item Let $H$ be a Hopf algebra over $R$ and $S$ a finite $R$-algebra which is also a left $H$-module algebra. If $R=S^H$, then we say that $S/R$ is an $H$-extension. 
\item (Hopf-Galois extension; original form) Let $R$ be a commutative ring and $H$ a finite cocommutative $R$-Hopf algebra. Let $S$ be a finite commutative $R$-algebra and left $H$-module algebra. Then $S/R$ is Hopf-Galois extension with $H$ if the $R$-module homomorphism $j : S \# H  \to \End_R(S)$ which is given by $j(s \otimes h)(t)=sh(t)$ is an isomorphism. Here the smash product $S \# H$ is taken over $R$.  
\item (Hopf-Galois extension; in the sence of principal homogenious space) Let $R$ be a commutative ring and $H$ a finite $R$-Hopf algebra. Let $S$ be a finite $R$-algebra and left $H$-module algebra. Then $S/R$ is Hopf-Galois extension with $H$ if and only if the map $\gamma : S \otimes_R S \cong S \otimes_R H^\ast ; s \otimes t \mapsto (s \otimes 1) \sigma (t)$ becomes bijective, where, the map $\sigma : S \to S \otimes_R H^\ast$ is the induced comodule structure. 
\item  Let $R$ be an associated ring. Let $S$ be a finite $R$-algebra. Assume that $S$ is a left $H$-module algebra and $S^H=R$. We say that $S/R$ is tame $H$-extension if it is an $H$-extension such that $rank_R(H)=rank_R(S)$, $S$ is a faithful $H$-module and $IS=R$, where $I=H^H$. 
\end{itemize}
\end{defn}



\begin{rem}
Let $R$ be a commutative algebra, $H$ a finite Hopf algebra over $R$ and $S$ a finite commutative $R$-algebra which is also a left $H$-module algera. Then $j : S \otimes_R H  \to \End_R(S)$ which is given by $j(s \otimes h)(t)=sh(t)$ is an isomorphism if and only if the map $\gamma : S \otimes_R S \cong S \otimes_R H^\ast ; s \otimes t \mapsto (s \otimes 1) \sigma (t)$ is an isomorphism, where, the map $\sigma : S \to S \otimes_R H^\ast$ is the induced comodule structure. 

\end{rem}

\begin{rem}
Note that if $S$ is commutative in a Hopf-Galois extension, $H^\ast$ is also commutative. 
\end{rem}

\begin{rem}
The condition of Galois extension $S/R$ of rings requires $S[G] \cong \End_R(S)$ instead of the bijectivitiy of  $\gamma : S \otimes_R S \to S \otimes_R RG^\ast ; x \otimes y \mapsto \Sigma x y_{(0)} \otimes y_{(1)}$. 
\end{rem}


\begin{exa}
Let $L/K$ be a Galois field extension with Galois group $G$. An $L$-vector space $M$ becomes a Hopf module $M \in \mathcal{M}^{KG}_L$, via an action of the Galois group $G$ on $M$, by semilinear automorphisms $\sigma \cdot (am)=\sigma (a)(\sigma \cdot m)$ for all $m \in M$, $a \in L$ and $\sigma \in G$. Galois descent says that such an action on $M$ to be obtained from a $K$-vector space by extending scalars. This is part of the fundamental theorem for Hopf modules. 
\end{exa}

\begin{exa}
Let $A=\oplus_{g \in G}A_g$ be a $K$-algebra graded by a group $G$. Then $A$ is naturally an $KG$-comodule algebra whose coinvariant subring is $B=A_e$. If $K$ is a field and $A_gA_h=A_{gh}$ for all $g, h \in G$, i.e., $A$ is strongly graded, then the Galois map $A \otimes_B A \to A \otimes_K KG$ is surjective. This implies that $A$ is an Hopf-Galois extension of $B$ by \cite[Theorem 1]{Schneider}. 
\end{exa}

\begin{lemma}
Let $R$ be a commutative ring and $S$ a finite commutative $R$-algebra.  
Let $H$ be a finite $R$-Hopf algebra. Then if the map $\gamma : S \otimes_R S \cong S \otimes_R H^\ast$ is a bijective, it is an algebra map. 
\end{lemma}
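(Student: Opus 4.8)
**Proof proposal for the Lemma (if $\gamma\colon S\otimes_R S\xrightarrow{\sim} S\otimes_R H^\ast$ is bijective, then it is an algebra map).**

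The plan is to verify that $\gamma$ respects multiplication and units directly from its defining formula $\gamma(s\otimes t)=(s\otimes 1)\sigma(t)$, where $\sigma\colon S\to S\otimes_R H^\ast$ is the comodule structure coming from the $H$-action, written in Sweedler notation $\sigma(t)=\sum t_{(0)}\otimes t_{(1)}$ with $h(t)=\sum t_{(0)}\langle h,t_{(1)}\rangle$. First I would fix the ring structures on both sides: on $S\otimes_R S$ the componentwise product (legitimate since $S$ is commutative), and on $S\otimes_R H^\ast$ the product in which the first factor $S$ is an algebra and the second factor $H^\ast$ carries its convolution (dual) algebra structure — i.e. $(s\otimes\varphi)(s'\otimes\varphi')=ss'\otimes\varphi\varphi'$. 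One should note at the outset that because $S$ is commutative and $\gamma$ is assumed bijective, $H^\ast$ is forced to be commutative as well (this is Remark stated just above), so there is no ambiguity in reading $\varphi\varphi'$.

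The core computation is to expand $\gamma(s\otimes t)\cdot\gamma(s'\otimes t')$ and $\gamma\bigl((s\otimes t)(s'\otimes t')\bigr)=\gamma(ss'\otimes tt')$ and check they agree. For the first,
\[
\gamma(s\otimes t)\gamma(s'\otimes t')=\Bigl(\sum s\,t_{(0)}\otimes t_{(1)}\Bigr)\Bigl(\sum s'\,t'_{(0)}\otimes t'_{(1)}\Bigr)=\sum s s'\,t_{(0)}t'_{(0)}\otimes t_{(1)}t'_{(1)}.
\]
For the second, I would use that $\sigma$ is an algebra map $S\to S\otimes_R H^\ast$ — equivalently, that $S$ is an $H$-module \emph{algebra}, so $h(tt')=\sum h_{(1)}(t)\,h_{(2)}(t')$, which dualizes to $\sigma(tt')=\sum t_{(0)}t'_{(0)}\otimes t_{(1)}t'_{(1)}$ (the product in the second slot being convolution in $H^\ast$). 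Hence $\gamma(ss'\otimes tt')=\sum ss'\,t_{(0)}t'_{(0)}\otimes t_{(1)}t'_{(1)}$, matching the previous display. Compatibility with units is immediate: $\gamma(1\otimes 1)=(1\otimes 1)\sigma(1)=1\otimes\varepsilon_{H^\ast}$ since the $H$-action on $S$ is unital ($h(1)=\varepsilon(h)1$), and $\varepsilon_{H^\ast}$ — evaluation-at-$1_H$, equivalently the counit of $H$ viewed in $H^\ast$ — is precisely the unit of the convolution algebra $H^\ast$.

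Given these checks, $\gamma$ is a bijective ring homomorphism, hence an $R$-algebra isomorphism, and the Lemma follows. I do not expect a genuine obstacle here; the only point that needs care is bookkeeping the dualization between "$S$ is an $H$-module algebra" and "$\sigma$ is an algebra map into $S\otimes_R H^\ast$ with the convolution product," i.e. making sure the multiplication on $H^\ast$ that makes the statement true is the convolution product dual to the comultiplication of $H$ (so that the target $S\otimes_R H^\ast$ of $\gamma$ is being given exactly the algebra structure under which the claim is meant). Finiteness of $H$ as an $R$-module is used so that $H^\ast$ is again a Hopf algebra and the $H$-action/$H^\ast$-coaction translation is an equivalence; commutativity of $S$ is used both to make $S\otimes_R S$ a commutative ring in the obvious way and (via the assumed bijectivity) to guarantee $H^\ast$ is commutative.
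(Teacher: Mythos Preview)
Your proposal is correct and follows exactly the same approach as the paper: the paper's proof is the single line $\gamma(xx'\otimes yy')=\sum xx'y_{(0)}y'_{(0)}\otimes y_{(1)}y'_{(1)}=\gamma(x\otimes y)\gamma(x'\otimes y')$, and you have simply unpacked this computation with more care (explicitly identifying the algebra structures, invoking the module-algebra condition for $\sigma$ being multiplicative, and checking units).
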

\begin{proof}
We have $\gamma (xx' \otimes yy')=\Sigma xx'y_{(0)}y'_{(0)}\otimes y_{(1)}y'_{(1)}=\gamma (x \otimes y)\gamma (x' \otimes y')$. 
\end{proof}

\begin{rem}[CH-tame and CEPT-tame]
Chinburg, Erez, Pappus and Taylor define an $H$-comodule algebra structure $\alpha : S \to S \otimes_R H$ to be tame if and only if there is an $H$-comodule map $g : H \to S$ such that $g(1)=1$. This is called CEPT-tameness. The tameness in Definition~\ref{ext} is called CH-tameness. 
\end{rem}



The surjectivity of trace map is closely related to the existence of total integral by virtue of Larson-Sweedler theorem.  

Recall that an $\mathcal{A}$-module is faithful if it satisfies the condition that an element $a \in \mathcal{A}$ such hat $as=0$ for all $s \in S$ is only $a=0$. 

\begin{prop}\label{D1}
Let $R$ be a local ring. Assume that $\mathcal{A}$ is a finite $R$-Hopf algebra and $S$ is a faithful $\mathcal{A}$-module algebra which is a finite $R$-algebra of $rank_R(S)=rank_R(\mathcal{A})$ and that $S^\mathcal{A}=R$. Then $S$ is a tame $\mathcal{A}$-extension of $R$ if and only if there exists left $\mathcal{A}$-module homomorphism  $g : \mathcal{A}^\ast \to S$ such that $g(1)=1$. Especially, if $R$ is a field and $S$ is a tame $\mathcal{A}$-extension of $R$, any relative $(S, \mathcal{A}^\ast)$-Hopf module is an injective $\mathcal{A}^\ast$-comodule. 
\end{prop}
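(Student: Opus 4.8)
The plan is to convert ``$S/R$ is tame'' into the surjectivity of the trace (integral) map, and then --- as the sentence preceding the statement indicates --- use the Frobenius structure of $\mathcal{A}$ (Larson--Sweedler) to match sections of the trace with total integrals; the last assertion then follows from Doi's theorem on total integrals. \emph{Tameness as a trace condition.} Since $R$ is local and $\mathcal{A}$ is a finite $R$-Hopf algebra, $\mathcal{A}$ is a Frobenius $R$-algebra with module of left integrals $I=R\lambda$, free of rank one on some $\lambda$ (cf.\ Childs~\cite{Childs1}). From $h\lambda=\varepsilon(h)\lambda$ one gets $h\cdot(\lambda\cdot s)=\varepsilon(h)\,(\lambda\cdot s)$ for every $s\in S$, so $\lambda\cdot S\subseteq S^{\mathcal{A}}=R$ and $IS=\lambda\cdot S$ is an ideal of $R$; it equals $R$ precisely when $1_S\in\lambda\cdot S$, i.e.\ when the trace map $\mathrm{tr}_\lambda\colon S\to R,\ s\mapsto\lambda\cdot s$, is onto. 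Under the standing hypotheses the sole requirement beyond them in the definition of a tame $\mathcal{A}$-extension is $IS=R$, so: $S/R$ is a tame $\mathcal{A}$-extension $\iff$ $\mathrm{tr}_\lambda$ is surjective $\iff$ there is $s_0\in S$ with $\lambda\cdot s_0=1$.

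\emph{Trace sections versus total integrals.} Because $\mathcal{A}$ is Frobenius, $\mathcal{A}^{\ast}$ is free of rank one as a left $\mathcal{A}$-module for the action $h\cdot f=[x\mapsto f(xh)]$, which is the one corresponding, under the equivalence between left $\mathcal{A}$-modules and right $\mathcal{A}^{\ast}$-comodules, to the comodule $(\mathcal{A}^{\ast},\Delta_{\mathcal{A}^{\ast}})$. The crux is that a free generator $e$ can be chosen with $\lambda\cdot e=\varepsilon_{\mathcal{A}}=1_{\mathcal{A}^{\ast}}$: for any free generator $e_0$, the identity $x\lambda=\varepsilon(x)\lambda$ gives $\lambda\cdot e_0=\langle e_0,\lambda\rangle\,\varepsilon_{\mathcal{A}}$, and since $\lambda\neq0$ while the map $\mathcal{A}\to\mathcal{A}^{\ast},\ h\mapsto h\cdot e_0$, is bijective, $\langle e_0,\lambda\rangle\in R^{\times}$ (immediate over a field; over a local ring it follows by reduction modulo the maximal ideal, using that the integral reduces to a nonzero integral over the residue field), so $e:=\langle e_0,\lambda\rangle^{-1}e_0$ works. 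Then evaluation at $e$ is an isomorphism $\Hom_{\mathcal{A}}(\mathcal{A}^{\ast},S)\xrightarrow{\ \sim\ }S$, $g\mapsto g(e)$, and $g(1_{\mathcal{A}^{\ast}})=g(\lambda\cdot e)=\lambda\cdot g(e)$ by $\mathcal{A}$-linearity; hence the elements $g(1_{\mathcal{A}^{\ast}})$ range exactly over $\lambda\cdot S=IS$. Therefore a left $\mathcal{A}$-module map $g\colon\mathcal{A}^{\ast}\to S$ with $g(1)=1$ exists $\iff$ $1_S\in IS$ $\iff$ $S/R$ is a tame $\mathcal{A}$-extension. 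Since such a $g$ is exactly a morphism of right $\mathcal{A}^{\ast}$-comodules, this amounts to saying $S$ has a total integral (the CEPT-tameness recalled in the text), and the first assertion is proved.

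\emph{The final assertion, and the main obstacle.} If $R=K$ is a field and $S/K$ is tame, the above produces a total integral $g\colon\mathcal{A}^{\ast}\to S$, and Doi's theorem applies: for a relative $(S,\mathcal{A}^{\ast})$-Hopf module $M$ the coaction $\rho_M\colon M\to M\otimes_K\mathcal{A}^{\ast}$ is a monomorphism of comodules into the cofree comodule $M\otimes_K\mathcal{A}^{\ast}$, split by an explicit $\mathcal{A}^{\ast}$-comodule retraction built from $g$ and the (bijective) antipode of $\mathcal{A}^{\ast}$ --- the verification that it retracts $\rho_M$ being a direct computation from the relative-Hopf axioms, $g(1)=1$, $g$ being a comodule map, and the antipode identities. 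As cofree comodules are injective over a field, $M$ is a direct summand of an injective $\mathcal{A}^{\ast}$-comodule, hence injective (equivalently, since $\dim_K\mathcal{A}^{\ast}<\infty$ identifies $\mathcal{A}^{\ast}$-comodules with $\mathcal{A}$-modules and $\mathcal{A}$ is Frobenius, $M$ is a projective $\mathcal{A}$-module). The only genuinely delicate point is the normalization above --- realizing the Frobenius isomorphism $\mathcal{A}^{\ast}\cong\mathcal{A}$ of left $\mathcal{A}$-modules so that $\varepsilon_{\mathcal{A}}$ corresponds to the integral $\lambda$, and in particular checking over a local (rather than field) base that $\langle e_0,\lambda\rangle$ is a \emph{unit}; everything else is an unwinding of the definition of tameness together with standard facts (the Frobenius property of finite Hopf algebras over a local ring, Doi's total-integral theorem, and the injectivity of cofree comodules).
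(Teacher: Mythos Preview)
Your proof is correct and follows essentially the same route as the paper's: use Larson--Sweedler to write $\mathcal{A}^{\ast}=\mathcal{A}\cdot t$ as a free rank-one left $\mathcal{A}$-module, normalize so that $\lambda\cdot t=1_{\mathcal{A}^{\ast}}$, and then observe that $g\mapsto g(t)$ identifies $\mathcal{A}$-maps $g\colon\mathcal{A}^{\ast}\to S$ satisfying $g(1)=1$ with elements $z\in S$ satisfying $\lambda z=1$, while the last assertion is deferred to Doi's theorem. If anything, you are more careful than the paper about the one nontrivial point --- the normalization $\lambda\cdot t=1$ over a local (not just field) base --- which the paper simply asserts.
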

\begin{proof}
By Larson-Sweedler theorem, we have the left $\mathcal{A}$-module $\mathcal{A}^\ast=\mathcal{A}t$. 
Since $R$ is a local ring, we take a nonzero left integral $\lambda \in \mathcal{A}$ such that $\lambda t=1$. If $S$ is a tame $\mathcal{A}$-extension, then $\lambda S=R$, so we have $z \in S$ such that $\lambda z=1$. Then we have an $\mathcal{A}$-module homomorphism $g : \mathcal{A}^\ast \to S$ by $g(t)=z$. Then $g(\lambda t)=g(1)=\lambda z=1$. 

Conversely if $g : \mathcal{A}^\ast \to S$ is an $\mathcal{A}$-module homomorphism with $g(1)=1$, then there exists an element in $R$-dual basis, which exists by finiteness, $g(t)=z \in S$ which gives $\lambda z = \lambda g(t)=g(\lambda t)=g(1)=1$ so the trace map is surjective. 

The last assertion follows from Doi's theorem~\cite[Chapter II, Theorem 1]{Doi2}; which says, for a Hopf algebra $\mathcal{A}$ over a field $K$, if there is a right $\mathcal{A}^\ast$-comodule map $\mathcal{A}^\ast \to S$, then any right relative $(S, \mathcal{A}^\ast)$-Hopf module is an injective $\mathcal{A}^\ast$-comodule. 
\end{proof}

\begin{rem}
Let $R$ is a local ring. If $f : R \to S$ is a flat algebra map. For a prime ideal $q \subset S$, assume that $p=f^{-1}(q)$ is nonzero, then $R \to S_q$ is faithfully flat. 
\end{rem}

\begin{rem}
Let $R$ be a commutative unital ring and $H$ is a Hopf algebra over $R$ which is a finitely generated projective $R$-module. Then,  $H^{\ast \ast} \cong H$. 
\end{rem}

By the above proposition, under some assumptions, the surjectivity of the trace map, i.e., $IS=S^\mathcal{A}$ is equivalent to the existence of the $\mathcal{A}$-module map $\mathcal{A}^\ast \to S$. 

Also, note that if $R$ is a principal ideal domain, then $J=\mathcal{A}^{\mathcal{A}}$ is free of rank one over $R$. Thus, we have the following. 

\begin{cor}
Let $R$ be a commutative local ring. Let $\mathcal{A}$ be a finite $R$-Hopf algebra and $S$ a faithful $\mathcal{A}$-module algebra which is a finite commutative $R$-algebra of $rank_R(S)=rank_R(\mathcal{A})$. Assume that $S^\mathcal{A}=R$. 

Then, if $R$ is a principal ideal domain, $S/R$ is a tame $\mathcal{A}$-extension if and only if the Hopfological homology of $S$ is equal to zero.   

If $R$ is a field $K$ and $S/K$ is a tame $\mathcal{A}$-extension,  the Hopfological homology of any relative $(S, \mathcal{A}^\ast)$-module is zero. 
\end{cor}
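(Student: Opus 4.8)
The plan is to derive both assertions from the explicit description of Hopfological homology together with Proposition~\ref{D1}. First I would unwind what ``the Hopfological homology of $S$'' means: viewing $S$ as an $\mathcal{A}$-module, it is $S^{\mathcal{A}}/IS$ with $I=\mathcal{A}^{\mathcal{A}}$ the module of left integrals, and a short computation shows this equals $R/IS$ for an honest ideal $IS\subseteq R$. Indeed, for a left integral $\lambda$ and $h\in\mathcal{A}$, $s\in S$ one has $h\cdot(\lambda\cdot s)=(h\lambda)\cdot s=\varepsilon(h)(\lambda\cdot s)$, so $IS\subseteq S^{\mathcal{A}}=R$; and for $r\in R=S^{\mathcal{A}}$ one has $\lambda\cdot(rs)=r(\lambda\cdot s)$, since $\mathcal{A}$ acts on $r$ through $\varepsilon$, so $IS$ is an $R$-submodule of $R$. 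Hence the Hopfological homology of $S$ vanishes if and only if $IS=R$.

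For assertion (i), let $R$ be a local principal ideal domain. Then, as recorded just before the statement, $I=\mathcal{A}^{\mathcal{A}}$ is free of rank one over $R$, say $I=R\lambda$, so $IS=\lambda S$ and $IS=R$ amounts to surjectivity of the trace map, i.e.\ to the existence of $z\in S$ with $\lambda z=1$. All remaining conditions in the definition of a tame $\mathcal{A}$-extension --- $S$ faithful over $\mathcal{A}$, $rank_R(S)=rank_R(\mathcal{A})$, and $S^{\mathcal{A}}=R$ --- are already part of the standing hypotheses, so $S/R$ is a tame $\mathcal{A}$-extension exactly when $IS=R$, which by the preceding paragraph is exactly when the Hopfological homology of $S$ is zero. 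Equivalently, one may route this through Proposition~\ref{D1}, identifying $IS=R$ with the existence of an $\mathcal{A}$-module map $g:\mathcal{A}^{\ast}\to S$ with $g(1)=1$.

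For assertion (ii), let $R=K$ be a field and suppose $S/K$ is a tame $\mathcal{A}$-extension. By the last clause of Proposition~\ref{D1} (which rests on Doi's theorem), every relative $(S,\mathcal{A}^{\ast})$-Hopf module is an injective $\mathcal{A}^{\ast}$-comodule. Since $\mathcal{A}$ is finite dimensional over $K$, $\mathcal{A}^{\ast}$ is a coFrobenius Hopf algebra, so injective and projective $\mathcal{A}^{\ast}$-comodules coincide and are precisely the objects that become zero in Farinati's triangulated (stable) category of right $\mathcal{A}^{\ast}$-comodules; since $M\mapsto H_{0}^{\mathcal{A}^{\ast}}(M)=M^{co\mathcal{A}^{\ast}}/IM$ is a stable invariant on that category, it vanishes on every such object, hence on every relative $(S,\mathcal{A}^{\ast})$-Hopf module. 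The only delicate point in the argument is the identification of the Hopfological homology of $S$ with $R/IS$ in the first paragraph; once that is granted, both halves are a direct assembly of Proposition~\ref{D1} and the standard vanishing of stable invariants on injectives.
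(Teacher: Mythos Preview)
Your proposal is correct and follows essentially the same approach as the paper: the paper's implicit proof rests on the remark that over a PID the integral module $I=\mathcal{A}^{\mathcal{A}}$ is free of rank one, together with Proposition~\ref{D1} for the equivalence with tameness, and then notes that Hopfological homology vanishes on injective comodules to handle the second assertion. Your explicit identification of the Hopfological homology of $S$ with $R/IS$ and your invocation of Proposition~\ref{D1} plus the stable-invariant property on injectives are exactly the ingredients the paper has in mind, only written out in more detail.
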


Similarly, the Hopfological homology for $\mathcal{A}^\ast$-comodules defined as in \cite{Fari} is zero in this case since it is zero for any injective comodules.

\begin{prop}
Let $R$ be a field and $S$ a finite faithful $H$-extension with a Hopf algebra $H$ over $R$. 
Suppose that $H$ is a local cocommutative Hopf algebra over $R$ and $rank_R(H)=rank_R(S)$. 
Then $S/R$ is a Hopf-Galois extension with $H$ if and only if the Hopfological homology of $S$ is equal to zero. 
\end{prop}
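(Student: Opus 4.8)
The plan is to route both implications through tameness. Since the field $R$ is in particular a principal ideal domain (hence local), all hypotheses of the preceding Corollary are in force with $\mathcal{A}=H$: $R$ is a commutative local PID, $H$ is a finite $R$-Hopf algebra, $S$ is a finite commutative faithful $H$-module algebra with $rank_R(S)=rank_R(H)$ and $S^H=R$. That Corollary therefore already gives that the Hopfological homology of $S$ vanishes if and only if $S/R$ is a tame $H$-extension. It then remains to identify tameness with the Hopf-Galois property: that $S/R$ is a tame $H$-extension if and only if it is a Hopf-Galois extension with $H$. This is the classical criterion for a finite cocommutative $H$ with $rank_R(S)=rank_R(H)$ recalled in the Introduction (see \cite{Childs1}), the projectivity/flatness hypotheses being automatic over the field $R$; so one approach is simply to invoke it. Below I indicate how to make each direction essentially self-contained.

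For ``Hopf-Galois $\Rightarrow$'' one can argue directly, bypassing tameness. If $S/R$ is Hopf-Galois with the cocommutative $H$, then $j:S\# H\to\End_R(S)$, $j(s\otimes h)(t)=sh(t)$, is an isomorphism, in particular injective. Let $0\neq\lambda\in H$ be a left integral (the space of left integrals is one-dimensional by Larson--Sweedler and is nonzero because $H$ is local, hence not semisimple; note $\lambda^2=\varepsilon(\lambda)\lambda=0$). Then $1\otimes\lambda\neq0$ in $S\# H$, so $j(1\otimes\lambda):t\mapsto\lambda\cdot t$ is a nonzero endomorphism of $S$; pick $t_0$ with $\lambda\cdot t_0\neq0$. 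From $h\cdot(\lambda\cdot t_0)=(h\lambda)\cdot t_0=\varepsilon(h)(\lambda\cdot t_0)$ we get $\lambda\cdot t_0\in S^H=R$; since $IS\subseteq S^H=R$ is an $R$-submodule of a field containing the nonzero element $\lambda\cdot t_0$, we conclude $IS=S^H$, so the Hopfological homology $S^H/IS$ is zero (and, with the standing rank equality and faithfulness, $S/R$ is tame).

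For the converse, assume the Hopfological homology of $S$ vanishes, equivalently $S/R$ is tame. By Proposition~\ref{D1} the (unit) relative $(S,H^\ast)$-Hopf module $S$ is an injective $H^\ast$-comodule; since $H$ is finite dimensional, $H^\ast$-comodules are the same as $H$-modules, and $H$ being Frobenius makes ``injective'' equivalent to ``projective'', while $H$ being local makes ``projective'' equivalent to ``free''. Hence $S$ is a free left $H$-module, and $rank_R(S)=rank_R(H)$ forces $S\cong H$ as left $H$-modules, i.e.\ $S/R$ admits a normal basis. Over the field $R$, a finite commutative $H$-module algebra with $S^H=R$ and a normal basis is a Hopf-Galois extension with $H$; concretely one shows the Galois map $\gamma:S\otimes_RS\to S\otimes_RH^\ast$ is injective and then uses $\dim_R(S\otimes_RS)=(\dim_RS)^2=\dim_R(S\otimes_RH^\ast)$, or equivalently one checks that the symmetric associative trace form $\tau(a,b)=\lambda\cdot(ab)$ is nondegenerate, so that the operators $t\mapsto a\,(\lambda\cdot(bt))=j(\textstyle\sum a(\lambda_{(1)}\cdot b)\otimes\lambda_{(2)})(t)$ already span $\End_R(S)$ and $j$ is surjective, hence bijective.

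I expect the genuine obstacle to be precisely this last step --- promoting the normal basis (equivalently, the existence of $z\in S$ with $\lambda z=1$) to surjectivity of $j$, i.e.\ nondegeneracy of the trace form. This is where cocommutativity of $H$ and $S^H=R$ really enter, and it is the substantive content of the classical identification of tame $H$-extensions with Hopf-Galois extensions; everything else is bookkeeping, with the ``local'' hypothesis on $H$ used only to guarantee non-semisimplicity (so the left integral is nonzero and nilpotent and the Hopfological homology is the relevant invariant) and to make the preceding Corollary and Proposition~\ref{D1} applicable.
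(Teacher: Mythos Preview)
Your proposal is correct and follows exactly the paper's route: the paper's entire proof is the single line that, under these hypotheses, Hopf--Galois is equivalent to tame by \cite[Theorem~14.7]{Childs1}, which together with the preceding Corollary (tame $\Leftrightarrow$ Hopfological homology of $S$ vanishes, since a field is a local PID) gives the result. Your additional self-contained arguments for each direction are sound supplementary material, and you correctly identify that the only substantive input beyond the Corollary is precisely Childs' equivalence of tame with Hopf--Galois.
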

\begin{proof}
In this case $S/R$ is a Hopf-Galois extension with $H$ if and only if it is a tame $H$-extension by \cite[Theorem 14.7]{Childs1}. 






\end{proof}

The simplest example is to take $L/K$ to be Galois with Galois group $G$ and let $RG$ act on $S$. Then the condition, $IS=R$, is the same as the condition that the trace map $S \to R$ is surjective, which holds if and only if $L/K$ is tamely ramified field extension. 

\begin{rem}
Note that, for a discrete valuation ring $R$,  a finite Hopf algebra over $R$ is a Frobenius algebra. 
\end{rem}

\section{Hopfological homology and Hopf-cyclic homology}


We use the notation pseudo-para-cyclic object and cyclic object. These definitions are given in \cite{Ohara1} and \cite{Kaygun}. 
We proceed calculations in the setting of \cite{HKRS}.

\begin{rem}
If one would like to apply the approximation theorem as in \cite[Theorem 4.7, 4.8]{Kaygun},  one may assume that the antipode of $H$ is bijective and also assume that $S$ is commutative for satisfying the condition~\cite[p.347 (iii)]{Kaygun}. 

Since the comonad in \cite[Section 4]{Kaygun} consists of the scalar extension functor $H \otimes (-)$ and the forgetful functor, we have injective $H$-comodules after applying the approximation theorem, which is contructible in the derived category of $H$-comodules at least when $H$ is a coFrobenius Hopf algebra over a field. Therefore, we keep in mind the situation as follows; we let $\mathcal{H}$ be a Hopf algebra over a field $K$. Let $H < \mathcal{H}$ be a finite dimensional Hopf subalgebra. Let $L/K$ be an $H$-extension of fields, $R$ discrete valuation ring of $K$ and $S$ the integral closure of $R$ in $L$. We also feel like a subalgebra $\mathcal{A} \subset H$ to be the associated order of $S$. Note that $\mathcal{A}$ may not a Hopf algebra in general. 
\end{rem}

Now, we recall \cite[Definition 1.1]{HKRS} for the anti-Yetter-Drinfeld module over a Hopf algebra $H$, and use the (co)cyclic theory as in \cite[Section 2, Section 3]{HKRS}.  

We take a left-left stable anti-Yetter-Drinfeld module $M$ over $H$. Let $R$ be a commutative ring and $H$ a finite $R$-Hopf algebra. Let $S$ be an $R$-algebra which is a finitely generated faithful projective $R$-module and an $H$-module algebra. We use the tensor product over $R$. 

First, we define $T_n(S, M)$ by $S^{\otimes n} \otimes_R M$ and the face, degeneracy, and cyclic operators on it as in \cite{HKRS}; 
 \begin{align}\label{Hopf}
  d_{i}([a_{0}|a_{1}|\cdots|a_{n}]m) & =
  \begin{cases}
   [a_{0}|a_{1}| \cdots | a_{i}a_{i+1}| \cdots |  a_{n}]m , & i\neq n \\ 
     [a_n^{(0)}a_0 | a_{1} |\cdots |a_{n-1}]a_n^{(1)}m, & i=n 
  \end{cases} \\
  s_{i}([a_{0}|a_{1}|\cdots | a_{n}]m) & =
  [a_{0}|a_{1}| \cdots |a_{i}| 1 | a_{i+1} | \cdots | a_{n}]m \\
  t_{n}([a_{0}|a_{1}|\cdots | a_{n}]m ) & =
   [a_n^{(0)}|a_{0}|a_{1}| \cdots|a_{n-1}] a_{n}^{(1)}m. 
 \end{align}
These satisfy the condition $d_nt_n=t_{n-1}d_{n-1}$ but do not satisfy the invertibility and cyclicity of the operator $t_n$. 
Therefore, we need to take the cotensor product $S^{\otimes n} \square_H M$, i.e., we take $\Ker (\rho_{S^{\otimes n}} \otimes id_M - id_{S^{\otimes n}} \otimes \rho_M )$ where we denote by $\rho_{S^{\otimes n}}$ and $\rho_M$ the comodule structure maps, respectively, and we also need to take $M$ as left-left anti-Yetter-Drinfeld module over $H$ as in \cite{HKRS}. 
If we do so, $T_\bullet (S, M)$ inherits the cyclic structure. 

Now let us take $M'$ to be a relative $(S, H^\ast)$-module. Since the antipode of $H$ is bijective, we can associated the left $H^\ast$-comodule $M$ to the right $H$-comodule $M'$. Then, we can assume that $M$ is a stable left-left anti-Yetter-Drinfeld module over $H$. 

Before taking the cyclic module $S^{\otimes n} \square_H M$, we study the property of $T_\bullet (S, M)$.

Since the case of $H$-module algebras is written in \cite{HKRS}, one may see the following lemma and proposition. 

\begin{lemma}\label{213}
Let $R$ be a commutative ring and $H$ a finite $R$-Hopf algebra. Let $S$ be an $R$-algebra which is a finitely generated faithful projective $R$-module and an $H$-module algebra so that $j : S \# H \to \End_R(S)$ is an isomorphism. Then, for any left $S \# H$-module $M$, $M \cong S \otimes_R M^H$. 
\end{lemma}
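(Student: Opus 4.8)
The plan is to recognize this as an instance of faithfully flat descent (the fundamental theorem for Hopf modules) for the Galois-type extension $S/R$, obtained from plain Morita theory. The hypothesis that $j\colon S\#H\to\End_R(S)$ is an isomorphism identifies the ring $S\#H$ with $\End_R(S)$, and since $S$ is a finitely generated faithful projective $R$-module it is a progenerator of $\Mod_R$; hence $\End_R(S)$ is Morita equivalent to $R$, with the equivalence $\Mod_R\xrightarrow{\ \sim\ }\LMod_{\End_R(S)}$ given by $N\mapsto S\otimes_R N$ and an inverse $M\mapsto\Hom_{\End_R(S)}(S,M)$. It then only remains to identify, through $j$, the $R$-module $\Hom_{\End_R(S)}(S,M)$ with $M^H$.

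Concretely I would carry this out in the following steps. First, record that a finitely generated faithful projective module over a commutative ring has trace ideal equal to the whole ring, hence is a generator and thus a progenerator; this yields the Morita equivalence above together with the natural isomorphism $S\otimes_R\Hom_{\End_R(S)}(S,M)\xrightarrow{\ \sim\ }M$, $s\otimes\phi\mapsto\phi(s)$, valid for every left $\End_R(S)$-module $M$. Second, transport the $\End_R(S)$-module $S$ (evaluation action) back along $j$: the formula $j(s'\otimes h)(t)=s'h(t)$ shows that the corresponding $S\#H$-module structure on $S$ is the canonical one, $s'\cdot t=s't$ and $h\cdot t=h(t)$. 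Third, check that $\phi\mapsto\phi(1_S)$ is an isomorphism $\Hom_{S\#H}(S,M)\xrightarrow{\ \sim\ }M^H$ of $R$-modules (with $R\subseteq S^H$ acting on $M^H$ through $R\to S\hookrightarrow S\#H$): a map $\phi$ is determined by $\phi(1_S)$ since $\phi(s)=s\cdot\phi(1_S)$, the element $\phi(1_S)$ is $H$-invariant because $h\cdot 1_S=\varepsilon(h)1_S$, and conversely $m\in M^H$ gives back $s\mapsto s\cdot m$, whose $S\#H$-linearity reduces, via the smash-product relation $(1\otimes h)(s\otimes 1)=\sum(h_{(1)}\cdot s)\otimes h_{(2)}$, to the identity $\sum(h_{(1)}(s))\,\varepsilon(h_{(2)})\,m=(h(s))\cdot m$, which holds since $m\in M^H$. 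Combining the second and third steps with the Morita isomorphism of the first step gives $M\cong S\otimes_R M^H$, realized explicitly by $s\otimes m\mapsto s\cdot m$ and automatically $S\#H$-linear, where $S\#H=\End_R(S)$ acts on $S\otimes_R M^H$ through its action on the left tensor factor $S$.

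The only real difficulty is bookkeeping: keeping the left/right module conventions consistent among $\End_R(S)$, $R$, and $S\#H$, and invoking the correct multiplication rule in the smash product when verifying the $S\#H$-linearity of $m\mapsto(s\mapsto s\cdot m)$; everything else is formal. I would also remark that $S^H=R$ in fact follows from $j$ being an isomorphism: for $s\in S^H$ the element $s\otimes 1$ is central in $S\#H$, and $Z(\End_R(S))\cong R$ by Morita invariance of the center together with faithfulness of $S$ over $R$; so $M^H$ indeed carries its natural $R=S^H$-module structure, and one may read the conclusion either over $R$ or over $S^H$ without ambiguity.
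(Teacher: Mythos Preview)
Your proposal is correct and follows essentially the same route as the paper: invoke the Morita equivalence $\Mod_R\simeq\LMod_{\End_R(S)}$ coming from the progenerator $S$, transport along $j$, and identify $\Hom_{S\#H}(S,M)\cong M^H$. Your write-up is in fact more explicit than the paper's, which only sketches these steps; your verification of $\Hom_{S\#H}(S,M)\cong M^H$ via $\phi\mapsto\phi(1_S)$ and your remark that $S^H=R$ follows automatically are welcome additions.
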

\begin{proof}
Since $S$ is a faithful finitely generated projective $R$-module, there is a Morita equivalence from the category of left $R$-modules to the category of left $\End_R(S)$-modules given by $N \mapsto S \otimes_R N$ for an $R$-module $N$ and $M \mapsto \Hom_{\End_R(S)}(S, \End_R(S)) \otimes_{\End_R(S)} M$ for $M$ a left $\End_R(S)$-module. Thus, we have $M \cong S \otimes_R \Hom_{S \# H}(S, S \# H) \otimes_{S \# H} M$, and the right hand side is isomorphic to $\Hom_{S \# H}(S, M) \cong M^H$. 




\end{proof}

We see that if the following $\mathcal{A}$ become a Hopf algebra, then the degree shift of one-sided bar construction with coefficients in $M$ appears. 
We let $B_n(S, M)=S^{\otimes n}\otimes_R M$ and denote by $B_\bullet (S, M)$ the simplicial module obtained by one sided bar construction. 

\begin{prop}
Let $R$ be a commutative local ring. Let $\mathcal{A}$ be a finite $R$-Hopf algebra with bijective antipode and $S$ a faithful $\mathcal{A}$-module algebra which is a finite commutative $R$-algebra. 
Assume $S^\mathcal{A}=R$. 

Assume that $M$ is an $S \# \mathcal{A}$-module. Let us take one-sided bar construction of $S$ with coefficient in $M$ and $M^\mathcal{A}$ denoted by $B_\bullet(S, M)$ and $B_\bullet(S, M^\mathcal{A})$, respectively, where the tensor product is taken over $R$.  
If $j : S \# H \to E=End_R(S)$ is an isomorphism, then we have $B_\bullet(S, M) \cong B_\bullet(S, M^\mathcal{A})$, that is, the $B_\bullet(S, M)$ is isomorphic to the degree $+1$ shift of $B_\bullet(S, M^\mathcal{A})$. 
\end{prop}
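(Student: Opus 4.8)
The plan is to exploit Lemma~\ref{213}, which identifies any left $S \# \mathcal{A}$-module $M$ with $S \otimes_R M^{\mathcal{A}}$ whenever $j : S \# \mathcal{A} \to \End_R(S)$ is an isomorphism. Note that the hypothesis in the proposition is stated with $H$; I would first remark that in this statement $H$ plays the role of $\mathcal{A}$, so $j : S \# \mathcal{A} \to \End_R(S)$ being an isomorphism is exactly the Hopf-Galois condition, and Lemma~\ref{213} applies directly. Thus $M \cong S \otimes_R M^{\mathcal{A}}$ as $S \# \mathcal{A}$-modules, and in particular as left $S$-modules. Substituting this into the definition $B_n(S,M) = S^{\otimes n} \otimes_R M$ gives $B_n(S,M) \cong S^{\otimes n} \otimes_R S \otimes_R M^{\mathcal{A}} = S^{\otimes (n+1)} \otimes_R M^{\mathcal{A}} = B_{n+1}(S, M^{\mathcal{A}})$, which is the level-wise statement underlying the degree $+1$ shift.

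The substantive part is checking that these level-wise isomorphisms assemble into an isomorphism of simplicial modules $B_\bullet(S,M) \cong B_\bullet(S, M^{\mathcal{A}})[1]$, i.e.\ that they are compatible with all the face and degeneracy maps. Here I would be careful about what the shift $[1]$ of a simplicial module means and how its faces and degeneracies are defined; the natural choice is the one for which the extra last face of $B_{\bullet}(S, M^{\mathcal{A}})$ (the one that multiplies $a_n$ into $a_0$ using the $S$-module structure, or contracts against $M^{\mathcal{A}}$) becomes, under the identification $M \cong S \otimes_R M^{\mathcal{A}}$, the face $d_n$ of $B_\bullet(S,M)$ that acts on the coefficient module $M$. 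Concretely, under $m \leftrightarrow \sum m_{-1} \otimes m_0$ with $m_{-1} \in S$, the isomorphism sends $[a_0|\cdots|a_n]m$ to $[a_0|\cdots|a_n|m_{-1}]m_0$, and one checks the first $n$ faces and all degeneracies match on the nose because they only touch the $S^{\otimes n}$ tensor factors, while the last face and the ``boundary into the coefficients'' face get exchanged in the way dictated by the shift convention.

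The main obstacle I anticipate is bookkeeping rather than conceptual: one must verify that the Morita isomorphism $M \cong S \otimes_R M^{\mathcal{A}}$ of Lemma~\ref{213} is not merely an isomorphism of $R$-modules or of $S$-modules but is suitably natural and $S$-linear in a way that interacts correctly with the bar differential, and that the left $S$-action on $M$ used to form $B_n(S,M) = S^{\otimes n} \otimes_R M$ is the one that corresponds under the isomorphism to the action on the displayed last factor of $S^{\otimes(n+1)}$. Since $S$ is commutative and finite projective over the local ring $R$, the Morita theory is clean and the action identification is forced, so I expect no genuine difficulty, only careful indexing. I would therefore organize the proof as: (1) invoke Lemma~\ref{213} with $\mathcal{A} = H$ to get $M \cong S \otimes_R M^{\mathcal{A}}$; (2) write down the explicit map $B_n(S,M) \to B_{n+1}(S, M^{\mathcal{A}})$; (3) check compatibility with faces and degeneracies, noting explicitly the convention under which the last face of the shifted complex corresponds to the $M$-acting face $d_n$ of $B_\bullet(S,M)$; and (4) conclude that the map is an isomorphism of simplicial modules, hence $B_\bullet(S,M) \cong B_\bullet(S, M^{\mathcal{A}})[1]$.
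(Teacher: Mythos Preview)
Your approach is essentially the same as the paper's: invoke Lemma~\ref{213} to get $M \cong S \otimes_R M^{\mathcal{A}}$ and then read off the levelwise isomorphism $S^{\otimes n}\otimes_R M \cong S^{\otimes(n+1)}\otimes_R M^{\mathcal{A}}$. In fact you go further than the paper, which stops at the levelwise identification and does not verify compatibility with the face and degeneracy maps; your steps (2)--(4) constitute the simplicial bookkeeping that the paper simply omits.
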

\begin{proof}
In this setting, we have an isomorphism $S \# H \cong \End_R(S)$ if and only if we have $S \otimes_R S \cong S \otimes_R \mathcal{A}^\ast$, i.e., the condition of Hopf-Galois extension holds. 

By Lemma~\ref{213}, for any left $S \# \mathcal{A}$-module $M$, $M \cong S \otimes_R M^\mathcal{A}$. 
Therefore, we have $S^{\otimes n} \otimes_R M^{\mathcal{A}} \cong S^{\otimes (n-1)} \otimes_R M$ in each degree. 
\end{proof}

Together with \cite[Theorem 14.7]{Childs1}, which says that $S/R$ is Hopf-Galois extension with $\mathcal{A}$ if and only if it is a tame $\mathcal{A}$-extension under certain assumptions, we have the following. 
\begin{cor}
Let $R$ be a commutative local ring. Let $\mathcal{A}$ be a finite local cocommutative $R$-Hopf algebra with bijective antipode and $S$ a faithful $\mathcal{A}$-module algebra which is a finite commutative $R$-algebra of $rank_R(S)=rank_R(\mathcal{A})$. 
Assume $S^\mathcal{A}=R$. 

Assume that $M$ is an $S \# \mathcal{A}$-module. We denote by $B_\bullet(S, M)$ and $B_\bullet(S, M^\mathcal{A})$ the one-sided bar construction of $S$ with coefficient in $M$ and $M^\mathcal{A}$, respectively, where the tensor product is taken over $R$.  

Then, if the Hopfological homology of $S$ is equal to zero, then $B_\bullet(S, M) \cong B_\bullet (S, M^\mathcal{A})[1]$. 
\end{cor}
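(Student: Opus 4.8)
The plan is to translate the hypothesis ``the Hopfological homology of $S$ is zero'' into the statement that $j\colon S\#\mathcal{A}\to\End_R(S)$ is an isomorphism, and then quote the Proposition preceding this corollary; all three links in the chain are formal, so the corollary really only assembles facts already in hand.

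First I would unwind the definition of Hopfological homology, with $S$ regarded as an $\mathcal{A}$-module: it is $S^{\mathcal{A}}/IS$, where $I=\mathcal{A}^{\mathcal{A}}$ is the ideal of left integrals. For a left integral $\lambda\in I$ and $s\in S$, associativity of the action gives $h\cdot(\lambda\cdot s)=(h\lambda)\cdot s=\varepsilon(h)\,\lambda\cdot s$ for all $h\in\mathcal{A}$, so $IS\subseteq S^{\mathcal{A}}=R$ and the Hopfological homology of $S$ is simply $R/IS$. Hence it vanishes exactly when $IS=R$; combined with the standing assumptions that $S$ is a faithful $\mathcal{A}$-module with $rank_R(S)=rank_R(\mathcal{A})$ and $S^{\mathcal{A}}=R$, this is precisely the definition of a tame $\mathcal{A}$-extension (Definition~\ref{ext}) --- the same translation already recorded in Proposition~\ref{D1} and the corollary following it.

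Next I would invoke \cite[Theorem 14.7]{Childs1}: since $\mathcal{A}$ is a finite local cocommutative $R$-Hopf algebra and $rank_R(S)=rank_R(\mathcal{A})$, a tame $\mathcal{A}$-extension is the same as a Hopf-Galois extension for $\mathcal{A}$, which by the original form of the definition in Definition~\ref{ext} means exactly that $j\colon S\#\mathcal{A}\to\End_R(S)$, $j(s\otimes h)(t)=sh(t)$, is an isomorphism. At that point the preceding Proposition applies: through Lemma~\ref{213} one gets $M\cong S\otimes_R M^{\mathcal{A}}$ for every left $S\#\mathcal{A}$-module $M$, hence $S^{\otimes n}\otimes_R M\cong S^{\otimes(n+1)}\otimes_R M^{\mathcal{A}}$ compatibly with the faces and degeneracies of the bar construction, i.e. $B_\bullet(S,M)\cong B_\bullet(S,M^{\mathcal{A}})[1]$, which is the claim.

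I do not expect a genuine obstacle here; the only thing requiring care is bookkeeping across the three inputs --- confirming that the locality, cocommutativity and rank hypotheses assumed in the corollary are exactly those needed to apply \cite[Theorem 14.7]{Childs1}, and that $M$ is indeed an $S\#\mathcal{A}$-module (as assumed) so that Lemma~\ref{213} is available. The substantive work has already been carried out in the preceding Proposition and in Proposition~\ref{D1}, so this corollary should follow by assembly alone.
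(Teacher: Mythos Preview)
Your proposal is correct and follows exactly the route the paper takes: the corollary has no separate proof block, and is introduced by the sentence ``Together with \cite[Theorem~14.7]{Childs1}, which says that $S/R$ is Hopf-Galois extension with $\mathcal{A}$ if and only if it is a tame $\mathcal{A}$-extension under certain assumptions, we have the following.'' Your three-step assembly---vanishing Hopfological homology gives $IS=R$ hence tameness, Childs's Theorem~14.7 upgrades tameness to Hopf--Galois so that $j$ is an isomorphism, and then the preceding Proposition (via Lemma~\ref{213}) yields the degree shift---is precisely this argument spelled out in full.
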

It seems that this corollary implies the negative degree of the bar construction $B_{-1}(S, M)$ seems like $B_0(S, M^\mathcal{A})$. 

Now, we consider the case $T_\bullet (S, M)$ with the relation (3.1), (3.2) and (3.3). 
\begin{prop}[case of comodule; due to the theorem of Doi and of Schneider]\label{shift}
Let $K$ be a field and $S$ an $\mathcal{H}$-module commutative algebra over $K$. For a finite Hopf subalgebra $H \subset \mathcal{H}$ with bijective antipode, assume that $S$ is a finite faithful $H$-extension of  $R=S^{coH^\ast}$ by restricting action of $\mathcal{H}$. 
Suppose that $S$ is a faithfully flat left $R$-module and $S/R$ is a Hopf-Galois extension with $H$. 
Then, 
 $T_\bullet(S, M) \cong T_\bullet(S, M^\mathcal{A})[1]$. 

\end{prop}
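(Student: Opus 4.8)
The plan is to mimic the proof of the preceding Proposition (the $B_\bullet$ case), replacing the one-sided bar construction by the cyclic object $T_\bullet(S,M)$ and checking that the degree-shift isomorphism is compatible with the extra cyclic structure coming from \eqref{Hopf} and (3.2), (3.3). First I would observe that the hypotheses — $S$ a Hopf-Galois extension of $R=S^{coH^\ast}$ with finite Hopf algebra $H$, $S$ faithfully flat over $R$ — put us exactly in the situation where $S\#H\cong\End_R(S)$, hence (via Lemma~\ref{213}, whose proof only uses the Morita equivalence attached to the faithful f.g.\ projective $R$-module $S$ and the isomorphism $j$) every left $S\#H$-module $M$ satisfies $M\cong S\otimes_R M^{H}$. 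Since $\mathcal A\subset H$ here is taken to be $H$ itself (or at any rate the relevant invariants agree, $M^{\mathcal A}=M^H$), this gives in each simplicial degree a natural isomorphism
\[
T_n(S,M)=S^{\otimes n}\otimes_R M\;\cong\;S^{\otimes n}\otimes_R S\otimes_R M^{\mathcal A}\;=\;S^{\otimes(n+1)}\otimes_R M^{\mathcal A}\;=\;T_{n+1}(S,M^{\mathcal A}),
\]
which is the required degree $+1$ shift at the level of graded modules.

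Next I would verify that this levelwise isomorphism intertwines the operators. The faces $d_i$ and degeneracies $s_i$ for $i\neq n$ in \eqref{Hopf} only involve the algebra multiplication on $S$ and the insertion of $1$, so compatibility under the shift $[a_0|\cdots|a_n]m\mapsto[a_0|\cdots|a_n|z]m'$ (writing $m=z\cdot m'$ via $M\cong S\otimes_R M^{\mathcal A}$) is the same bookkeeping as in the $B_\bullet$ case, already handled by the previous Proposition. The genuinely new point is the top face $d_n$ and the cyclic operator $t_n$, which feed the last tensor slot back to the front twisted by the $H$-comodule structure $a_n\mapsto a_n^{(0)}\otimes a_n^{(1)}$ and the $H$-action on $M$. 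Here I would use that, under the isomorphism $M\cong S\otimes_R M^{\mathcal A}$, the $H$-module (equivalently $H^\ast$-comodule) structure on $M$ is the one induced from the $H$-comodule structure on $S$ and the trivial structure on $M^{\mathcal A}$; the Hopf-Galois / Galois-descent identification (Doi's and Schneider's theorems, cited in the statement) is precisely what guarantees this compatibility of comodule structures. Consequently the ``twisting'' in $d_n$ and $t_n$ on $T_\bullet(S,M)$ is transported to the ``twisting'' in $d_{n+1}$ and $t_{n+1}$ on $T_\bullet(S,M^{\mathcal A})$, the coinvariance of $M^{\mathcal A}$ ensuring that the extra slot $z$ carries the whole comodule twist. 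One then checks $d_n t_n = t_{n-1}d_{n-1}$ is preserved — but this is automatic once the individual operators correspond.

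The main obstacle I expect is exactly this last verification: namely, pinning down how the left-left anti-Yetter-Drinfeld / relative $(S,H^\ast)$-module structure on $M$ decomposes under $M\cong S\otimes_R M^{\mathcal A}$, and confirming that the AYD compatibility condition of \cite[Definition 1.1]{HKRS} is inherited by $M^{\mathcal A}$ with its induced (here essentially trivial) structure, so that $T_\bullet(S,M^{\mathcal A})$ is itself a legitimate cyclic object to shift onto. In other words, the algebra is routine modulo one structural fact — that Hopf-Galois descent identifies the comodule structure on any $S\#H$-module with the one coinduced from its coinvariants — and it is that fact (guaranteed by Doi's theorem together with \cite[Theorem 1]{Schneider}, exactly as flagged in the name of the proposition) on which the whole argument turns. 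I would therefore structure the proof as: (1) invoke $S\#H\cong\End_R(S)$ and Lemma~\ref{213} to get the levelwise iso; (2) cite the previous Proposition for the $d_i,s_i$ ($i\neq n$) compatibility; (3) carry out the $d_n$, $t_n$ compatibility using Hopf-Galois descent of the comodule structure; (4) conclude $T_\bullet(S,M)\cong T_\bullet(S,M^{\mathcal A})[1]$ as cyclic objects.
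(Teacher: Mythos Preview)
Your approach is correct but differs from the paper's in one key respect. You obtain the basic isomorphism $M\cong S\otimes_R M^{H}$ by going through Lemma~\ref{213}, i.e.\ through the Morita equivalence attached to $j:S\#H\xrightarrow{\sim}\End_R(S)$, and only afterwards invoke Doi and Schneider to control the comodule structure for the $d_n,t_n$ compatibility. The paper instead works entirely on the comodule side: it observes that $M$ lies in $\mathcal{M}_S^{H^\ast}$, recalls the adjunction $(-)\otimes_R S \dashv (-)^{coH^\ast}$ between $\mathcal{M}_R$ and $\mathcal{M}_S^{H^\ast}$, and then applies Schneider's theorem (faithfully flat Hopf-Galois $\Leftrightarrow$ this adjunction is an equivalence) to conclude $S\otimes_R M^{coH^\ast}\cong M$ directly. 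From there it simply writes $S^{\otimes n}\otimes_R M^{coH^\ast}\cong S^{\otimes(n-1)}\otimes_R M$ and declares the shift.

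What each approach buys: the paper's route is shorter and makes the faithfully flat hypothesis do the work (no need to check that $S$ is f.g.\ projective over $R$ so that Lemma~\ref{213} applies), and the comodule structure on $M$ is automatically the one induced from $S$ because that is how the equivalence $\mathcal{M}_R\simeq\mathcal{M}_S^{H^\ast}$ is built. Your route is more explicit about operator compatibility---indeed you do substantially more than the paper, which does not verify the $d_i,s_i,t_n$ compatibilities at all and simply asserts the degreewise isomorphism yields the shift. Your concern about the AYD structure on $M^{\mathcal A}$ is legitimate and is not addressed in the paper's argument either.
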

\begin{proof}
For a right $H^\ast$-comodule algebra $S$, a relative $(S, H^\ast)$-Hopf module $M \in \mathcal{M}_S^{H^\ast}$ is a left $S$-module in the monoidal category of $H^\ast$-comodules. 

For a finite dimensional Hopf algebra $H$ over a field $K$ and a right $H^\ast$-comodule algebra $S$, if there is a map $\phi : H^\ast \to S$ of right $H^\ast$-comodules algebras with $\phi (1)=1$, then the fundamental theorem for right relative $(S, H^\ast)$-Hopf modules is true~\cite[Chapter II, Theorem 3]{Doi2}.


More generally, let $H$ be a $R$-flat Hopf algebra, $S$ a right $H^\ast$-comodule algebra, and $R=S^{coH^\ast}$. Then the functor 
$\mathcal{M}_S^{H^\ast} \ni M \mapsto M^{coH^\ast} \in \mathcal{M}_R$ is right adjoint to $\mathcal{M}_R \ni N \mapsto N \otimes_R S \in \mathcal{M}_S^{H^\ast}$. 
Here, both the $S$-module and $H^\ast$-comodule structures of $N \otimes_R S$ are induced by those of $S$. 
A theorem of Schneider~\cite[Theorem 1]{Schneider} says that 
a faithfully flat left $R$-algebra $S$ is a Hopf-Galois extension of $R=S^{coH^\ast}$ with $H^\ast$ if and only if the adjunction is an equivalence. 

Therefore, we have $S \otimes_R M^{coH^\ast} \cong M$ by the fundamental theorem. 
Then, we have $S^{\otimes n} \otimes_R M^{coH^\ast} \cong S^{\otimes (n-1)} \otimes_R M$ in each degree, and the degree shift of $T_\bullet(S, M)$ appeared. 




\end{proof}

\bibliographystyle{amsplain} \ifx\undefined\bysame
\newcommand{\bysame}{\leavemode\hbox to3em{\hrulefill}\,} \fi
\begin{bibdiv}
\begin{biblist}

 \bib{Doi2}{article}{
   author={Doi, Yukio},
   title={On the structure of Hopf modules},
      note={Thesis},
   date={1984},
}


\bib{Fari}{article}{
   author={Farinati, Marco A.},
   title={Hopfological algebra for infinite dimensional Hopf algebras},
   journal={Algebr. Represent. Theory},
   volume={24},
   date={2021},
   number={5},
   pages={1325--1357},
 }


\bib{HKRS}{article}{
author={Piotr M. Hajac},
author={Masoud Khalkhali},
author={Bahram Rangipour},
author={Yorck Sommerh\"{a}user},
title = {Stable anti-Yetter-Drinfeld modules},
journal={Comptes Rendus Mathematique},
volume={338},
date={2004},
number={8},
pages={587-590},
 }

\bib{Kaygun}{article}{
   author={Kaygun, Atabey},
   title={The universal Hopf-cyclic theory},
   journal={J. Noncommut. Geom.},
   volume={2},
   date={2008},
   number={3},
   pages={333--351},
 }

\bib{Ohara}{article}{
    author={Ohara, Mariko},
    title={A model structure on the category of $H$-equivariant $A$-modules},
   journal={preprint},
   volume={},
   date={2023},
    pages={}
 }

\bib{Ohara1}{article}{
    author={Ohara, Mariko},
    title={A model structure and Hopf-cyclic theory on the category of coequivariant modules over a comodule algebra},
    journal={Journal of Algebra},
    volume={668},
    date={2025},
    pages={365--389}
 }  

\bib{Qi}{article}{
   author={Qi, You},
   title={Hopfological algebra},
   journal={Compos. Math.},
   volume={150},
   date={2014},
   number={1},
   pages={1--45},
}

\bib{JS1}{article}{
    author={Jara, Pascual},
    author={\c{S}tephan, Draco\c{s}}
    title={Cyclic homology of Hopf Galois extensions and Hopf algebras},
    journal={preprint},
    volume={},
    date={2003},
    pages={}
 }

\bib{Childs1}{book}{
   author={Childs, Lindsay N.},
   title={Taming wild extensions: Hopf algebras and local Galois module
   theory},
   series={Mathematical Surveys and Monographs},
   volume={80},
   publisher={American Mathematical Society, Providence, RI},
   date={2000},
   pages={viii+215},
}


\bib{Schneider}{article}{
   author={Schneider, Hans.},
   title={Principal homogeneous spaces for arbitrary Hopf algebras},
   journal={Israel J. Math.},
   volume={72},
   date={1990},
   pages={167--195},
}

\bib{Truman}{article}{
   author={Truman, Paul J.},
   title={Towards a generalisation of Noether's theorem to nonclassical
   Hopf-Galois structures},
   journal={New York J. Math.},
   volume={17},
   date={2011},
   pages={799--810},
   review={\MR{2862153}},
}


\end{biblist}		
\end{bibdiv}
\end{document}